\title{A note on lattice knots}
\author{Sasha Anan'in$^\dagger$, Alexandre Grishkov, Dmitrii Korshunov}
\newtheorem{definition}{Definition}
\newtheorem{theorem}{Theorem}
\newtheorem{remark}{Remark}
\newtheorem{lemma}{Lemma}
\newtheorem{example}{Example}
\newtheorem*{maintheorem}{Main theorem (informal statement)}
\newtheorem*{maintheoremfinal}{Main theorem}
\numberwithin{definition}{section}
\numberwithin{example}{section}
\numberwithin{remark}{section}
\begin{document}
\maketitle
%\pagenumbering{gobble}

\epigraph{48. A figure is an accident made up of position and habit.\\49. The general directions are six, with the body at the center of diametrical lines.}{{\it Ramon Llull,  Ars Brevis}}

\begin{abstract}
The aim of this note is to share the observation that the set of elementary operations of Turing on lattice knots  \cite{turing} can be reduced to just one type of simple local switches. 
\end{abstract}

\section{Introduction}
The subject of this note is the study of knots in $\mathbb{R}^3$ that can be composed out of unit intervals that are parallel to elements of the standard basis. It can be easily shown that each tame knot can be represented in this form (such representation is often refered to as a {\it cubulation} \cite{BHV}). We will see that any isotopy of knots carries over to a simple combinatorially defined equivalence relation on lattice knots.  Namely,

\begin{maintheorem}
Any two cubulated knots are isotopic if and only if one can be obtained from the other by a sequence of the folowing operations and their inverses as soon as each step does not create self-intersections:

\begin{figure}[h]
\centering
\includegraphics[scale=0.5]{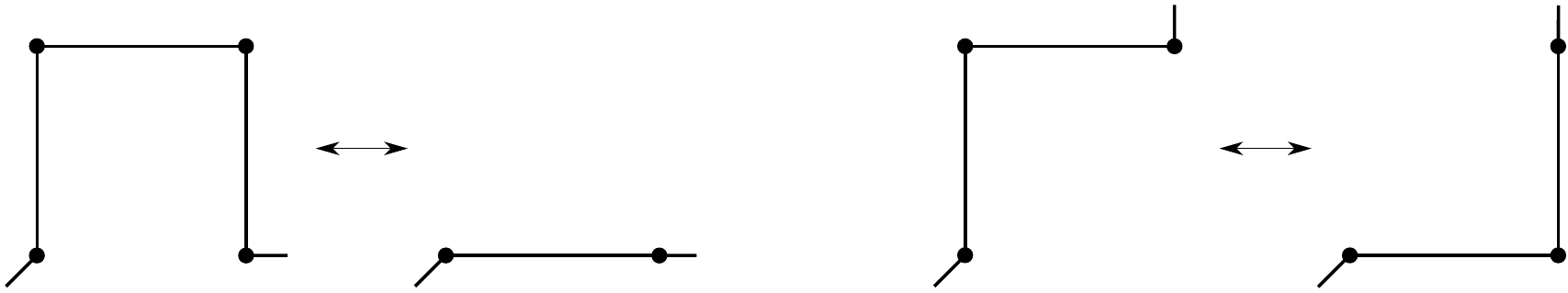}
\caption{Elementary switches}
\label{fig:elementary}
\end{figure}
\end{maintheorem}

It's rather curious that the first-ever survey of the subject of topology, written by Dehn and Heegaard for Klein's Encyklop{\"a}die der mathematischen Wissenschaften \cite{dehn}, was partially devoted to lattice knots. Lacking the modern language of topological spaces, they work with cubulated knots and combinatorial equivalence from the outset.
Also, Alan Turing in his famous article ``Solvable and Unsolvable Problems'' \cite{turing} addresses the problem of algorithmic distinction of cubulated knots. It should also be noted that lattice knots is a subject of interest of biologists as lattice models of polymers \cite{delbruck}, \cite{vanderzande}.
Grid diagrams, which play the key role in a combinatorial description of the knot Floer homology  \cite{grid}, give rise to lattice knots. On the other hand, cubulation is the first step in the construction of a grid diagram of a given knot. At last, we would like to remark that the higher dimensional analogs of cubulated knots also generated some literature \cite{DShSh}.

A version of the main theorem seems to be a folklore knowledge, mentioned as such, e.g. by Turing \cite{turing} and Przytycki.
To the best of the authors' knowledge the first paper where its proof was written up is \cite{verjovsky}. However, all the abovementioned authors require more operations as elementary. In particular, they include doubling (in the sense of our definition below) as a basic operation. The main novelty of our paper is the establishment of the fact that doubling follows from the rest of the operations of \cite{turing} and \cite{verjovsky}.

\subsection{Definitions}

Consider the set $\mathcal N$ obtained as the union of all translations by integer vectors of the union of three coordinate lines:
$$\mathbb Z^3 + \{(x,y,z):x=y=0 \text{ or } x=z=0 \text{ or } z=y=0\}\subset \mathbb R^3$$

\begin{definition}
An embedding $f: S^1\to \mathbb R^3$ such that $f(S^1)\subset \mathcal N$ is called a lattice knot.
\end{definition}

\begin{remark}Lattice knots are also referred to as {\it cubic knots} in the literature \cite{verjovsky}. It is easy to show that any tame knot is isotopic to a lattice knot.
\end{remark}

\begin{figure}[h]
\centering
\includegraphics[scale=1]{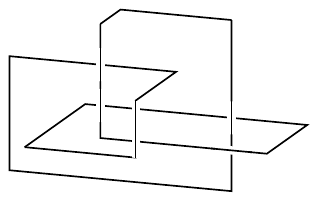}
\caption{Lattice trefoil knot $T$}
\label{fig:trefoil}
\end{figure}

In this paper we will represent a lattice knot by an element of the commutator subgroup of the free group $\mathbb F_3$ in the following way. To each lattice knot we attach a reduced word in the alphabet
$$\{x,y,z,\bar{x},\bar{y},\bar{z}\}$$
which encode linear embeddings of each unit-length segment of a knot.

\begin{example}
\label{ex:trefoil}
A lattice trefoil depicted in Figure \ref{fig:trefoil} in this notation is represented by 
$$T=xxz\bar{y}\bar{y}\bar{x}\bar{x}\bar{x}\bar{z}\bar{z}xxxxzzz\bar{x}\bar{x}y\bar{z}\bar{z}xxx\bar{y}\bar{y}\bar{x}\bar{x}\bar{x}\bar{x}yy$$
\end{example}

\begin{definition}
The  abelianization map $Ab: \mathbb F_3\to \mathbb{Z}^3$ is defined by the formula:
$$Ab: w \mapsto (|w|_x-|w|_{\bar x},|w|_y-|w|_{\bar y},|w|_z-|w|_{\bar z})$$

where $|w|_s$ is the number of occurrences of the symbol $s$ in the reduced word $w$.
\end{definition}

\begin{definition}
A subword of a word $w$ is a set of consecutive symbols of $w$. We will denote it by $w[n..m]$, where $n$ is the index of the first symbol and $m$ is the index of the last. The numbering starts at $1$ and $w[n..m]$ is the empty word if $m<n$.
\end{definition}

Now we are going to recast the definition of a lattice knot in terms of words:

\begin{definition}
\label{kn_condition}
A lattice knot is a reduced word $w$ in the alphabet $\{x,y,z,\bar{x},\bar{y},\bar{z}\}$ such that 
\begin{enumerate}
\item $Ab(w)=0$
\item{No proper subword $w'$ of $w$ has the property $Ab(w')=0$}
\end{enumerate}
\end{definition}

\begin{remark}
These two conditions amount to the closeness and the absence of self-intersections of a knot respectively.
\end{remark}

The following definition introduces the main operation of the paper:

\begin{definition} An operation which takes a subword $v$ in $w$ and substitutes it with $\bar{x}vx$ or $xv\bar{x}$ (analogously for $y$ and $z$) once the result (after a reduction, if necessary) is again a knot, will be called an elementary switch. We will denote it by $w[v\to \bar{x}vx]$, $w[v\to \bar{y}vy]$, and $w[v\to \bar{z}vz]$ etc.
\end{definition}

The typical elementary switch corresponding to the substitution $xx\to zxx\bar z$ is depicted in the following picture:

\begin{center}
\includegraphics[scale=0.6]{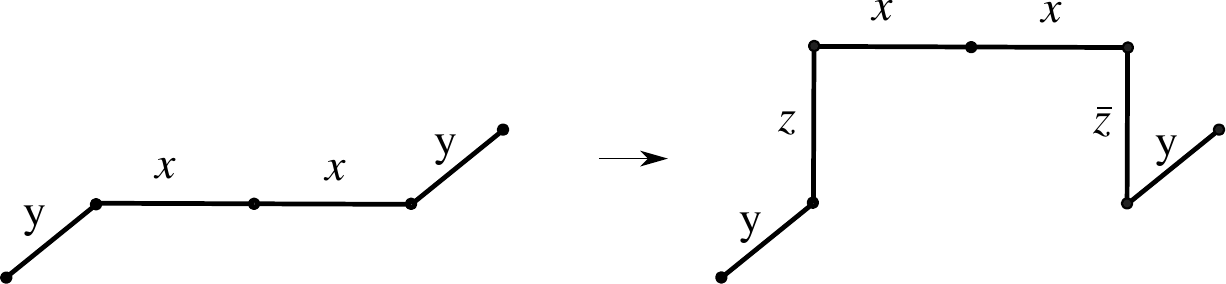}
\end{center}

Geometrically there can be two types of behavior, depending on wheather the result is a reduced word or not. This is illustrated by Figure \ref{fig:elementary}.

\begin{definition} An operation which takes a word and substitutes each $x$ with $xx$ is called a doubling in the $x$-direction. The operation of doubling in directions $y$ and $z$ are defined likewise.
\end{definition}

\begin{remark}
Doubling can be thought of as the same knot positioned in a refined lattice:

\begin{center}
\includegraphics[scale=0.35]{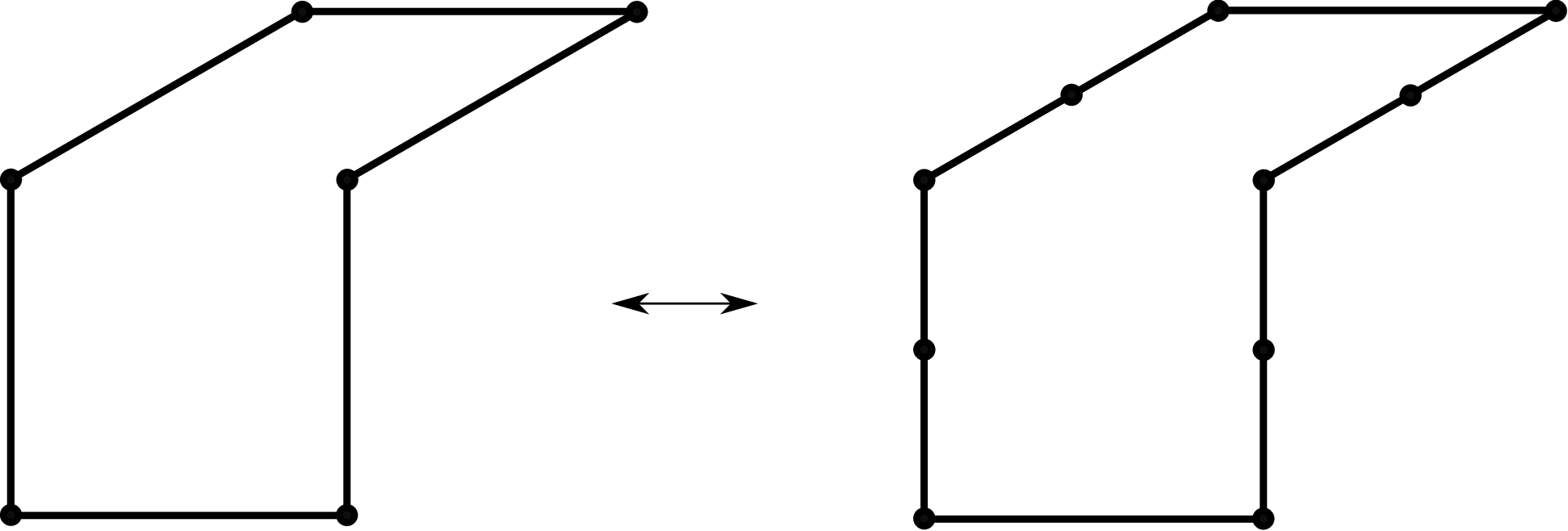}
\label{fig:scaling}
\end{center}

\end{remark}

\section{Doubling}

The purpose of this section is to prove that doubling in the $z$-direction can be achieved by repeated application of elementary switches.
\bigskip
\begin{center}
\includegraphics[scale=0.81]{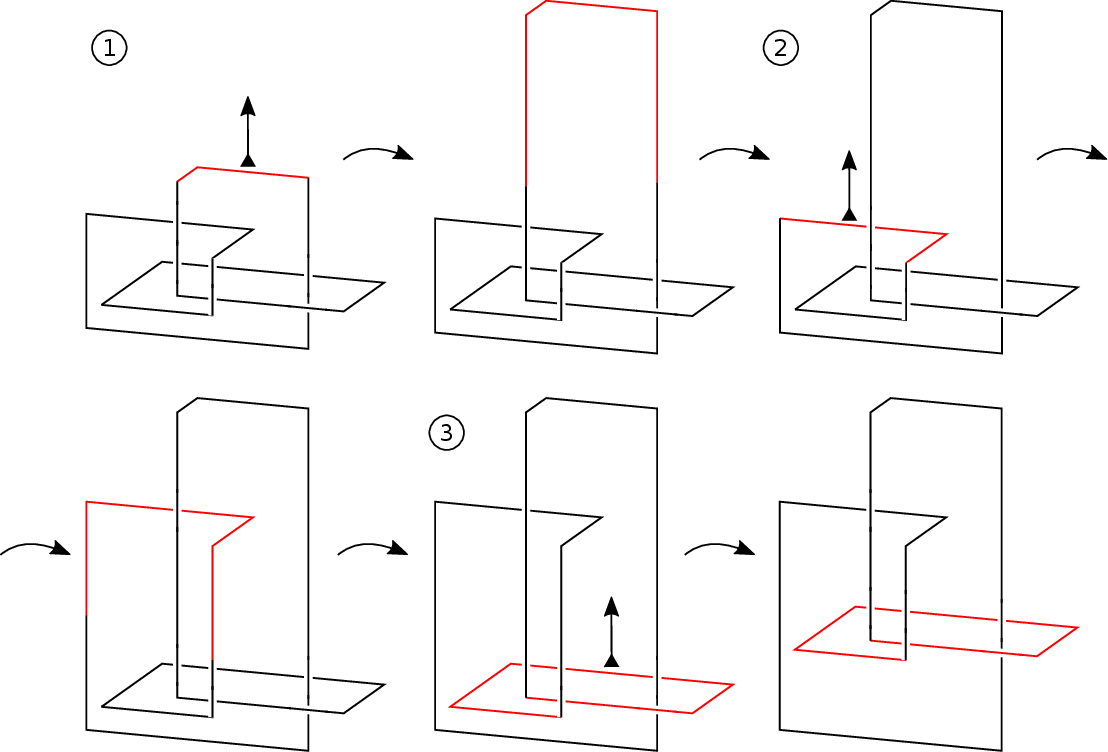}
\label{fig:z-scale}
\end{center}
\bigskip
It is done in stages depicted in the figure above  for the case of the trefoil knot of Example \ref{ex:trefoil}. The idea is very simple. First we lift the uppermost layer of the knot, which is always possible since there are no branches of the knot above it. Then we can  lift the next layer up because there are no horizontal segments between it and the already lifted layer to cause the intersection and eventually we will be able to lift all layers. The rest of the paper will be devoted to the formalization of this picture.

\begin{definition}
The set of all entries $w[m]$ of a word $w$ with the property $|w[1..m-1]|_z=|[1..m]|_z=n$ is called the $n$-th layer of $w$. In other words, it is the set of all entries $w[m]\neq z$, such that $|w[1..m]|_z=n$. Generally a layer is some number of subwords since it need not be connected. 
\end{definition}

The $n$-th layer is simply the set of segments that intersect with a plane parallel to the $xy$-plane and with $z$-coordinate equal to $n$. The picture below shows an example of a knot with $2$ layers, the $1$-layer is showed in red. Note that it is not connected.

\begin{center}
\includegraphics[scale=0.8]{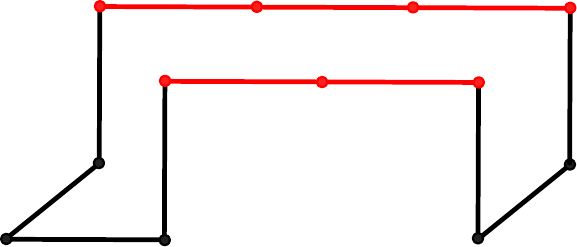}
\end{center}

\begin{example}
In Example \ref{ex:trefoil} the zeroth layer is highlighted in bold:
$$\pmb{xx}z\bar{y}\bar{y}\bar{x}\bar{x}\bar{x}\bar{z}\bar{z}xxxxzzz\bar{x}\bar{x}y\bar{z}\bar{z}\pmb{xxx\bar{y}\bar{y}\bar{x}\bar{x}\bar{x}\bar{x}yy}$$
\end{example}

(-1)-st level is

$$T=xxz\bar{y}\bar{y}\bar{x}\bar{x}\bar{x}\bar{z}\bar{z}\pmb{xxxx}zzz\bar{x}\bar{x}y\bar{z}\bar{z}xxx\bar{y}\bar{y}\bar{x}\bar{x}\bar{x}\bar{x}yy$$

and the second layer is
$$xxz\bar{y}\bar{y}\bar{x}\bar{x}\bar{x}\bar{z}\bar{z}xxxxzzz \pmb{\bar{x}\bar{x}y}\bar{z}\bar{z}xxx\bar{y}\bar{y}\bar{x}\bar{x}\bar{x}\bar{x}yy$$

\begin{definition}
We say that the $n$-th layer has $k$ vacant layers above if all layers of order $n+1,\dots n+k$ are empty.
\end{definition}

\begin{definition}
Elevation of an $n$-th layer by $k$ is the composition of all operations $w[v_i\to z^k v_i \bar z^k]$ (which clearly commute) for each subword $v_i$ that make up the $n$-th layer, followed by reduction. 
\end{definition}

\begin{example}
The first move of Figure \ref{fig:z-scale} is the elevation of the second layer by $3$. That is

\begin{equation*}
    \begin{split}
xxz\bar{y}\bar{y}\bar{x}\bar{x}\bar{x}\bar{z}\bar{z}xxxxzzz {\color{red}\bar{x}\bar{x}y}\bar{z}\bar{z}xxx\bar{y}\bar{y}\bar{x}\bar{x}\bar{x}\bar{x}yy\mapsto 
\\ 
xxz\bar{y}\bar{y}\bar{x}\bar{x}\bar{x}\bar{z}\bar{z}xxxxzzz\pmb{zzz}{\color{red}\bar{x}\bar{x}y}\pmb{\bar{z} \bar{z} \bar {z}}\bar{z}\bar{z}xxx\bar{y}\bar{y}\bar{x}\bar{x}\bar{x}\bar{x}yy
    \end{split}
\end{equation*}

\end{example}

\begin{remark}
The first condition of Definition \ref{kn_condition} is automatically satisfied after an elevation applied, because $z$ and $\bar z$ are added in pairs and the reduction kills pairs.
\end{remark}

The proof of the following lemma is rather obvious geometrically. It is essentially contained in the picture below. 
\begin{center}
\label{fig:intersection}
\includegraphics[scale=0.3]{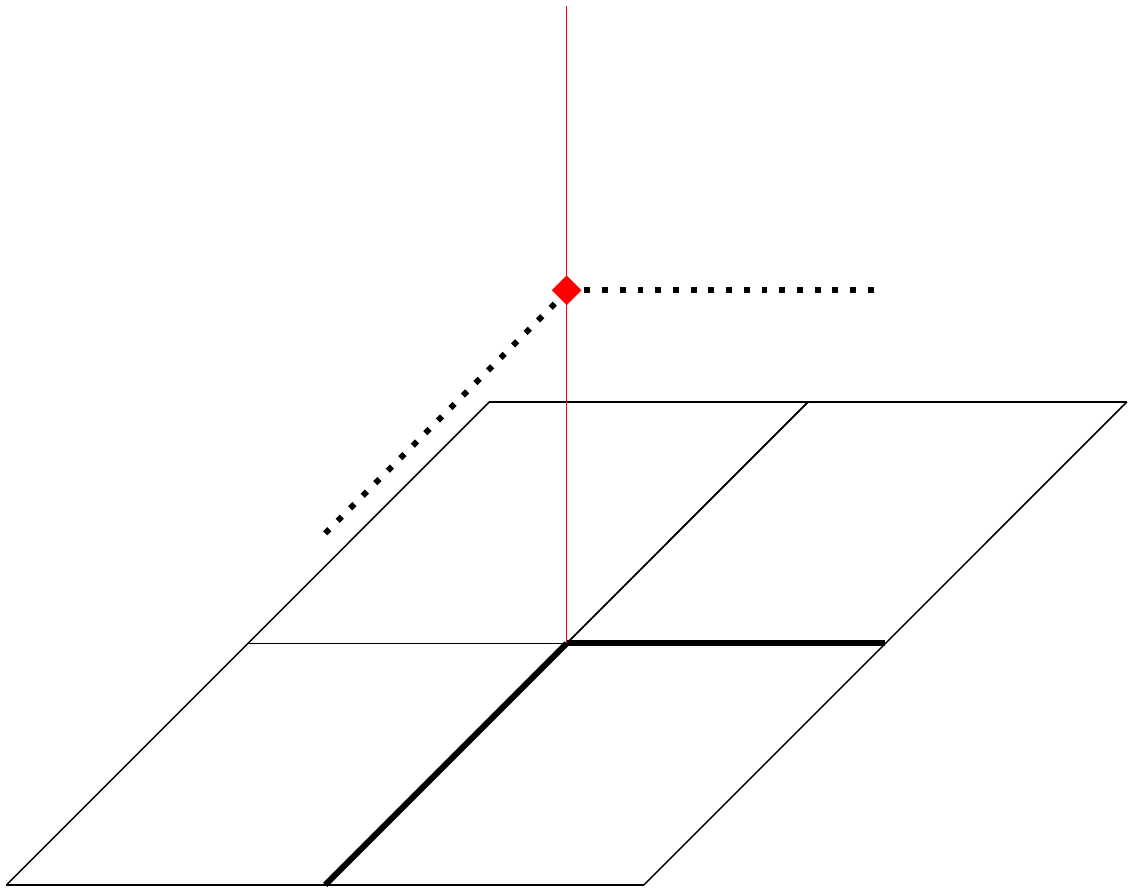}
\end{center}

If after a lifting the $n$-th layer it intersects another branch of the knot, the only possibility is when this other branch consists of two vertical segments in the viscinity of the hypothetical intersection point. But in this case an intersection already took place in the original knot.

For the sake of completeness we include a formal proof.

\begin{lemma}
\label{lift}
If the $n$-th layer of a knot $w$ has one vacant layer above, then the elevation $w'$ of the layer by one is also a knot. Moreover,  the  $n$-th layer of $w'$ becomes vacant, $n+1$-th layer of $w'$ coinsides with the $n$-th layer of $w$ and all other layers remain the same.
\end{lemma}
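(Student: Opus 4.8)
The plan is to argue entirely at the level of words, tracking how the two defining conditions of Definition \ref{kn_condition} behave under elevation by one. First I would set up notation: write $w$ as an alternating concatenation of the maximal blocks of $z$'s and $\bar z$'s that straddle the $n$-th layer together with the subwords $v_1,\dots,v_r$ that constitute the $n$-th layer, so that each $v_i$ sits between a trailing $z$ of one block and a leading $z$ (or $\bar z$) of the next; the elevation replaces each $v_i$ by $z v_i \bar z$ and then reduces. The key combinatorial observation is that, because the $n$-th layer lies at $z$-height $n$ and the $(n+1)$-st layer is empty, every block of $z$-letters immediately following a $v_i$ either begins with $z$ (the knot goes up, but then immediately must come back down since layer $n+1$ is vacant, so the block is exactly $z\bar z$... — more precisely, no segment lives at height $n+1$, so any excursion above height $n$ starting after $v_i$ is just $z$ followed at once by $\bar z$) or begins with $\bar z$. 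I would make this precise by a direct count: for an index $m$ lying inside some $v_i$, $|w[1..m]|_z=n$, and the vacancy hypothesis says no index $m'$ has $|w[1..m']|_z = n+1$ with $w[m']\ne z$; hence between consecutive layer-$n$ subwords the $z$-exponent never rises above $n$ except through a bare $z\bar z$ that cancels on reduction.

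Granting this, the reduction after elevation is completely controlled: the new $z$ inserted before $v_i$ cancels against the $\bar z$ at the end of the preceding block whenever that block ended in $\bar z$ (i.e. the knot was coming down into layer $n$), and symmetrically the new $\bar z$ after $v_i$ cancels into the following block when that block starts with $\bar z$; when instead the knot was going up, the adjacent block is $z\bar z$ and the new letters merge with it — in every case the net effect after reduction is exactly that each $v_i$ has been shifted from between its two neighbouring $z$-blocks to one $z$-level higher, with the $z$-blocks shortened or lengthened accordingly. From this explicit description the three "moreover" claims are immediate: reading off layers of $w'$, the old layer-$n$ subwords now have one more $z$ before them and one fewer after (or vice versa), i.e. they have become the $(n+1)$-st layer of $w'$; the $n$-th layer of $w'$ is empty because nothing was left at height $n$; and no letter below height $n$ or above height $n+1$ had its prefix-$z$-count changed, so all other layers are untouched.

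It remains to check that $w'$ is a knot, i.e. the two conditions of Definition \ref{kn_condition}. Condition (1), $Ab(w')=0$, is the Remark following the definition of elevation: $z$ and $\bar z$ are added in canceling pairs. Condition (2), no proper subword with zero abelianization, is the real content and the step I expect to be the main obstacle. I would prove the contrapositive: suppose $w'$ has a proper subword $u'$ with $Ab(u')=0$; I want to produce a proper subword $u$ of $w$ with $Ab(u)=0$, contradicting that $w$ is a knot. The idea is to "pull back" $u'$ along the elevation. Using the explicit block description above, the boundary indices of $u'$ correspond to boundary indices in $w$ (the inserted/cancelled $z$-letters only occur at the seams between $v_i$'s and their neighbouring blocks, and one checks that a zero-abelianization subword can always be adjusted to have its endpoints off these seams without changing its abelianization class, since the elevation changes the word only by conjugating the $v_i$ by powers of $z$). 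Concretely: each $v_i$ occurring wholly inside $u'$ contributes the same letters to $w$; each $z$-block is only modified at its two ends by $\pm 1$ in $z$-exponent, and these modifications telescope, so the $z$-count of the corresponding subword of $w$ differs from that of $u'$ by the discrepancy at its two endpoints, which is $0$ when the endpoints are chosen off the seams; the $x$- and $y$-counts are untouched. Hence the pulled-back subword $u$ of $w$ has $Ab(u)=0$, and a short check using that $u'$ is proper (plus the vacancy hypothesis to rule out the degenerate case where $u'$ is "almost all of $w'$") shows $u$ is proper. This contradiction establishes condition (2), and with it the lemma. The care needed in this pull-back — handling the seam indices and the degenerate near-full subword — is where the geometric picture ("an intersection already took place in the original knot") has to be turned into a clean word-level argument.
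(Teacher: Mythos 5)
Your overall strategy --- pull a zero-abelianization subword of $w'$ back to one of $w$ --- is the same as the paper's, and your treatment of condition (1) and of the ``moreover'' statements is fine. But the central step of your condition (2) argument contains a genuine gap. You claim that a subword $u'$ of $w'$ with $Ab(u')=0$ can be arranged to have its endpoints off the seams, and that then the pulled-back subword $u$ of $w$ has the same abelianization. That implication is false: whether the pull-back preserves the $z$-count is governed not by the endpoints lying on or off the seams, but by whether the endpoints lie \emph{on the lifted layer or off it}. If one endpoint of $u'$ sits strictly inside some lifted piece $v_i$ (so it was raised from height $n$ to $n+1$) and the other endpoint sits elsewhere in the knot (say in the interior of a vertical run passing through height $n+1$), then both endpoints avoid every seam, yet $u'$ contains an unbalanced number of inserted $z$'s and $\bar z$'s, and the pulled-back word $u$ has $Ab(u)=(0,0,\pm1)\neq 0$. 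No contradiction with $w$ being a knot follows, and no adjustment of endpoints can repair this, since moving either endpoint changes the abelianization. This mixed case is exactly Cases 2 and 3 of the paper's proof, and it is where the vacancy hypothesis must be used in earnest: one shows, by analyzing what letter can occupy the first position of the offending subword given that layer $n+1$ of $w$ is empty (a reduced word cannot stop at height $n+1$, so any vertical run through that height goes from height $n$ to height $n+2$), that such an intersection would force an intersection already present in $w$ at the point one unit below, or an immediate contradiction. Your proposal uses vacancy only to describe the block structure and in a vague aside about a ``degenerate near-full subword''; as written, your condition (2) argument would go through even when layer $n+1$ is occupied, in which case the lemma is simply false --- a clear sign the seam-avoidance claim cannot carry the load.

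A secondary slip: in your block description you assert that if the knot goes up immediately after a $v_i$ then the adjacent block is ``exactly $z\bar z$.'' In a reduced word no block $z\bar z$ exists; what vacancy of layer $n+1$ actually forces is the opposite, namely that an upward run leaving layer $n$ must have length at least two (it cannot terminate at height $n+1$, where a horizontal letter would have to follow). This does not damage your description of $w'$ after reduction, but the correct local picture (vertical runs crossing height $n+1$ go from $n$ to $n+2$) is precisely the ingredient needed to close the mixed case above, so it is worth getting right.
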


\begin{proof}
Suppose that after the elevation of the $n$-th layer, the new word, which we denote by $w'$, does not satisfy the second condition of Definition \ref{kn_condition}. That is, there is a proper subword $w'[n'_1..n'_2]$ such that $Ab(w'[n'_1..n'_2])=0$, or, equivalently, there are two different numbers $n'_1,n'_2$ such that $Ab(w'[1..n'_1])=Ab(w'[1..n'_2])$. 

Let us put back all pairs $\bar zz$ and  $z\bar z$ that has been canceled out after reduction. Let us denote by $N_1$ and $N_2$ the new places of $w'(n_1')$ and $w'(n_2')$ in this new (non-reduced) word $W$. 

Let us consider three complementary cases:
\begin{itemize}
\item [Case 1.] 
Suppose $N_1$ and $N_2$ are such that $W[N_1..N_2]$ contains equal number of $z$ and $\bar z$'s added during the elevation. But then the intersection must have existed already in $w$.

\item [Case 2.] Now consider the case when $W[N_1,N_2]$ contains more  added $z$'s than $\bar z$'s. Since $z$ and $\bar z$ are added by the elevation in an alternating manner, there is exactly one more added $z$ than $\bar z$'s. Now, $|W[N_1,N_2]|_z=0$ and hence $|w[n_1,n_2]|_{\bar z}=1$, where $w[n_1,n_2]$ is the subword of $w$ obtained by removing all added $z$'s and $\bar z$'s in  $W[N_1,N_2]$. 

On the other hand $|W[1..N_2]|_z=n+1$ and hence $|W[1..N_1]|_z=n+1$. That means that $W[N_1]=z$ or $\bar z$, otherwise it belonged to the $n+1$'th layer of $w$, which is empty. If $W[N_1]$ were $z$ that has been added by the elevation then $|W[N_1+1..N_2]|_z=0$ --- thus reduced to Case 1. If $W[N_1]$ is $z$ that was already in $w$, then $W[N_1+1]$ is necessarily $z$ that was already in $w$, otherwise the $n+1$'s layer in $w$ wouldn't be empty. And if $W[N_1]=\bar z$ then $|w[n_1+1..n_2]|_z=0$ and we again arrive at contradiction.

\item [Case 3.] When there are one more added $\bar z$'s than $z$'s in $W[n_1,n_2]$, by an argument identical to Case 2 one proves that intersections also does not appear after the elevation. Alternatively, one may use the fact that cyclic permutation of a word gives the same knot and the complement of $w'[n'_1..n'_2]$ satisfying the assumption of Case 3 satisfies the assumption of Case 2.

\end{itemize}

The last statement of the lemma follows from the observation that only the $n$-th and $(n+1)$-th layers are affected by the operation of elevation. Namely, the $n$-th layer dissapears and becomes the $n$-th layer in the new knot.

\end{proof}

Now from Lemma \ref{lift} we can establish the doubling in any direction by induction on the number of layers.

\begin{theorem}
\label{doubling}
Let $w$ be a knot. Then the doubling of $w$ in any direction and $w$ are connected by a sequence of elementary switches. 
\end{theorem}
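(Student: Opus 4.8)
The plan is to reduce the theorem to Lemma~\ref{lift} by an inductive ``peeling'' argument, carried out in two halves: first lift every layer far enough that the original knot occupies a prescribed block of $z$-levels with plenty of empty room, and then show that the doubled knot is reachable from this intermediate position (and hence, by reversibility of elementary switches, from $w$ itself). Without loss of generality we work with doubling in the $z$-direction; by cyclic invariance of the word and the symmetry among the three coordinate directions the other two cases are identical.

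First I would set up the bookkeeping. Since $w$ is a knot it is a finite word, so only finitely many layers are nonempty; after a cyclic permutation (which does not change the knot) we may assume the nonempty layers are exactly those of order $0,1,\dots,L$ for some $L\ge 0$. The first step is a ``spreading'' phase: starting from the top layer $L$ and working downward, I apply Lemma~\ref{lift} repeatedly to elevate layer $L$ by some large amount $M$, then layer $L-1$ by $M$, and so on down to layer $0$, choosing $M$ large enough (e.g.\ $M = 2L+2$ suffices) that at every stage the layer being lifted has at least one vacant layer immediately above it. Lemma~\ref{lift} guarantees each such elevation keeps the word a knot and simply relabels the layers, so after this phase we have a knot $w_0$, obtained from $w$ by elementary switches, in which the old layer $k$ now sits at level $2k$ (say), with all odd levels in between empty. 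The key point, which follows directly from the ``moreover'' clause of Lemma~\ref{lift}, is that the horizontal structure within each layer is untouched — only $z$ and $\bar z$ symbols are inserted and cancelled — so $w_0$ is literally the word obtained from $w$ by replacing each maximal run $z^a$ (resp.\ $\bar z^a$) in the word by $z^{2a}$ (resp.\ $\bar z^{2a}$)? \emph{Not quite}, and this is the subtlety I flag below.

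The heart of the argument is the second, ``interleaving'' phase, and this is where I expect the main obstacle. The doubled knot $D(w)$ is obtained from $w$ by replacing every $z$ by $zz$ and every $\bar z$ by $\bar z\bar z$; geometrically it is $w$ in the refined lattice, so its layer $2k$ and layer $2k+1$ are \emph{both} copies of the old layer $k$, connected by the doubled vertical edges. By contrast, in $w_0$ each old layer $k$ appears only once, at level $2k$, and level $2k+1$ is empty. So I must produce, by elementary switches, a copy of layer $k$ at level $2k+1$ for every $k$, glued correctly to the existing structure. The clean way to do this is again by induction, now from the bottom: the pattern of elementary switches that realizes a doubling on a single isolated layer (a planar closed lattice curve living in one horizontal plane, with vacant space above) should be elementary to describe — it is essentially ``push the whole loop up by one, then push it back down on a parallel track,'' i.e.\ a sequence of switches $w[v_i\to z v_i\bar z]$ followed by the appropriate reductions — and since in $w_0$ consecutive occupied levels are separated by a vacant level, these single-layer doublings do not interfere with one another and can be performed in any order. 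One then checks that doubling all layers in this way, starting from $w_0$, yields exactly $D(w)$ up to the vertical relabelling. The obstacle is precisely the verification that no self-intersection is created during these single-layer doublings and that the bookkeeping of vertical segments between consecutive layers matches that of $D(w)$; here I would lean on Lemma~\ref{lift} and its $Ab$-based argument applied layerwise, together with the observation that the insertion of a $z\bar z$ pair adjacent to a vertical run $z^a$ reduces to lengthening that run, which never changes the $Ab$-value of any subword in a way that could force $Ab(w')=0$ for a proper $w'$.

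Finally, chaining the two phases: elementary switches are by definition reversible (each is its own inverse up to direction), so the composite ``$w \rightsquigarrow w_0 \rightsquigarrow D(w)$'' shows $w$ and $D(w)$ are connected by a sequence of elementary switches, which is the assertion of Theorem~\ref{doubling}. I would present the single-layer doubling as a short separate lemma to keep the induction clean, and I would state explicitly the relabelling identification between the intermediate ``spread-out'' knot and the genuinely doubled knot so that the reader can see the two pictures coincide.
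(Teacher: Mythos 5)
Your proposal contains a genuine conceptual error about what the doubled knot is, and this error both invalidates your second phase and hides the fact that your first phase (done correctly) already finishes the proof. Doubling in the $z$-direction only substitutes $z\mapsto zz$ and $\bar z\mapsto\bar z\bar z$; the horizontal letters $x,y,\bar x,\bar y$ are untouched. Hence in $D(w)$ every vertical run $z^a$ becomes $z^{2a}$, the old layer $k$ sits at level $2k$, and the odd levels are \emph{empty} --- they contain only vertical segments passing through, not a second copy of layer $k$. So your hedge ``\emph{Not quite}'' is misplaced: the spread-out knot $w_0$ with layer $k$ at level $2k$ and horizontal structure untouched \emph{is} exactly $D(w)$, and your entire ``interleaving'' phase rests on a false description of $D(w)$ (no layer is duplicated). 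Moreover, that phase would not work even on its own terms: ``push the whole loop up by one, then push it back down on a parallel track'' is a composition of a switch and its inverse, which returns the word to itself rather than producing a duplicate horizontal loop; a closed embedded curve cannot acquire a second copy of a layer by such moves, and none is needed.

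There is also a bookkeeping slip in your first phase: elevating every layer by the \emph{same} amount $M$ is just a vertical translation of the whole knot and does not separate the layers, so it cannot place old layer $k$ at level $2k$. The correct (and essentially the paper's) procedure is: after a cyclic shift making the nonempty layers $0,\dots,m$, elevate the top layer $m$ by $m$, then layer $m-1$ by $m-1$, and so on down to layer $1$; at each stage the layer being lifted has vacant layers above, so each unit elevation is legitimate by Lemma~\ref{lift}, and each elevation by $k$ is a composition of $k$ unit elevations, hence of elementary switches. The resulting word has layer $k$ at level $2k$ with every vertical run doubled, i.e.\ it is $D(w)$, and no second phase is required. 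In short, your outline becomes a correct proof once you (a) fix the elevation amounts to be $k$ for layer $k$ rather than a uniform $M$, and (b) delete the interleaving phase after checking, directly from the definition of doubling, that the spread-out knot coincides with $D(w)$.
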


\begin{proof}
Let $m$ be the maximal number such that the $m$-th layer is nonempty. Clearly it possesses a vacant layer above. Hence we can elevate it $m$ times, ending up with a knot with $2m$ layers with all layers between $m-1$ and $2m$ vacant. We can then elevate the $m-1$-th layer by $m-1$ and so on all the way through to the zeroeth level.
\end{proof}

\section{Isotopy}
A theorem of Hinojosa-Verjovsky-Marcotte \cite[Theorem 1]{verjovsky}, adapted to the terminology of this paper, states

\begin{theorem}
Two lattice knots $w_1$ and $w_2$ are isotopic if and only if one can be obtained from the other by a sequence of elementary switches, doubling and their inverses.
\end{theorem}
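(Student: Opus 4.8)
The plan is to prove the two implications separately; the forward (``if'') direction is essentially formal, while the converse carries all the weight. For ``if'', note that an elementary switch $w[v\to\bar xvx]$ slides the arc of the knot spelled by $v$ one unit in the $-x$ direction and reconnects it by two new unit segments, sweeping out a ribbon of unit squares, one per letter of $v$. Since every edge of a lattice knot runs along an edge of the cube lattice, such an edge can meet this ribbon only along its boundary, so the hypothesis that the output is again a knot forces the sweep to meet the rest of the knot only along $v$ itself; hence the switch is realized by an ambient isotopy. A doubling is the same knot placed in the refined lattice $\tfrac12\mathcal N$ and then blown up by a factor $2$, so it too is an isotopy, and inverses of isotopies are isotopies; thus knots related by such a sequence are isotopic.

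For the converse, assume $w_1$ and $w_2$ are isotopic. Two polygonal knots that are topologically isotopic are PL ambient isotopic, and a PL ambient isotopy of a polygon in $\mathbb R^3$ factors into finitely many elementary $\Delta$-moves: each replaces one edge $e$ of the current polygon by the other two sides of a triangle $T$ whose interior is disjoint from the polygon (or performs the inverse). It therefore suffices to treat one such move between lattice knots $w$ and $w'$ and exhibit doublings of $w$ and $w'$ joined by elementary switches. Subdividing $T$, I may assume it is arbitrarily small; then I would choose $k$ so large that after $k$-fold doubling in all three directions the triangle $T$, pushed into the refined lattice, can be spanned by a cubical disk $S$ — a union of faces of unit cubes of the refined lattice with $\partial S$ the refined image of $e$ together with the two new edges — whose interior is disjoint from the refined knot; this is possible precisely because the interior of $T$ misses $w$, so there is room once the mesh is fine enough. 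Sweeping the arc $e$ across $S$ one unit square at a time is then a sequence of elementary switches whose composite replaces $e$ by the two new edges, realizing the refined $\Delta$-move.

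The \textbf{main obstacle} is the embeddedness bookkeeping: an elementary switch is admissible only when every intermediate word is still a knot, so the unit-square steps sweeping out $S$ must be ordered so that the knot stays embedded throughout. This is handled in the spirit of the argument of Lemma \ref{lift}: since $S$ is embedded and meets the knot only along $\partial S$, its squares admit a monotone (shelling-type) order in which the moving arc always sweeps a single empty unit square, so no intermediate self-intersection arises; a hypothetical crossing would, as in that lemma, have had to be present already. Finally one makes the resolution uniform — take $k$ to be the maximum needed over the finitely many $\Delta$-moves, double $w_1$ and $w_2$ that many times, and concatenate the resulting switch sequences. Combining this theorem with Theorem \ref{doubling}, which rewrites every doubling as a sequence of elementary switches, then removes all the doublings and yields the Main theorem.
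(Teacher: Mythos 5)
First, a point of comparison: the paper does not prove this statement at all --- it is quoted as Theorem 1 of Hinojosa--Marcotte--Verjovsky \cite{verjovsky}, and the note's own contribution (Theorem \ref{doubling}) is only that doubling is redundant. So your proposal must be judged as an independent proof of the cited result. Your ``if'' direction is essentially sound: the region swept by a switch $w[v\to\bar xvx]$ is a union of unit lattice squares (degenerate for the letters $x,\bar x$ of $v$), a lattice edge can meet a closed unit square only in its boundary, and embeddedness of the input and output words then forces the swept region to miss the rest of the knot, so the switch is a disk push; doubling is an affine scaling with positive determinant, hence also an isotopy.

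The converse, which you rightly identify as carrying all the weight, has genuine gaps. (1) After factoring the PL isotopy into $\Delta$-moves you say it ``suffices to treat one such move between lattice knots $w$ and $w'$'', but the intermediate polygons of that factorization are arbitrary polygonal knots, not lattice knots; one must approximate every intermediate polygon by a lattice knot at a common sufficiently fine mesh, prove the approximation is well defined up to elementary switches, and connect the first and last approximations to doublings of $w_1$ and $w_2$. This approximation machinery is precisely the technical content of \cite{verjovsky} and is absent from your sketch. (2) Relatedly, the spanning surface you posit --- a cubical disk $S$ with $\partial S$ equal to ``the refined image of $e$ together with the two new edges'' --- does not typecheck: the two new sides of the triangle are straight segments in arbitrary directions, not lattice paths, so they must themselves be replaced by lattice approximations, and one must then check that the arc produced by your sweep agrees, up to switches, with the corresponding arc of the refined target knot. (3) The claim that the squares of $S$ ``admit a monotone (shelling-type) order'' so that each step is an admissible switch is exactly the hard embeddedness bookkeeping; unlike Lemma \ref{lift}, where the layer structure of the lattice supplies the order and the contradiction argument for free, here you would need an actual shellability statement for cubulated $2$-disks together with a verification that each single-square push keeps the word a knot. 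The outline is the right one --- it is in the spirit of the proof in \cite{verjovsky} --- but as written the converse is a plan, not a proof.
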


Our Theorem \ref{doubling} ensures that doubling follows from elementary switches. Or, in the terminology of \cite{verjovsky} that any (M1) move is a sequence of (M2) moves. Thus, combining two results we arrive at the 

\begin{maintheoremfinal}
Two lattice knots $w_1$ and $w_2$ are isotopic if and only if they can be connected by a sequence of elementary switches.
\end{maintheoremfinal}

\bibliographystyle{plain}
\bibliography{biblo.bib}

\end{document}